\documentclass[12pt]{amsart}

\usepackage{fullpage}
\usepackage{amsmath}
\usepackage{amsfonts}
\usepackage{amssymb}
\usepackage{amsthm}
\usepackage[all,knot,poly]{xy}
\usepackage{xypic}
\usepackage{hyperref}
\hypersetup{
    colorlinks=true,
    linkcolor=blue,
    filecolor=magenta,      
    urlcolor=cyan,
}
 
\urlstyle{same}

\input xy
\xyoption{all}

\theoremstyle{plain}

\newtheorem{theorem}{Theorem}[section]
\newtheorem{lemma}[theorem]{Lemma}
\newtheorem{proposition}[theorem]{Proposition}
\newtheorem{corollary}[theorem]{Corollary}
\newtheorem{remark}[theorem]{Remark}

\newtheorem{example}[theorem]{Example}
\newtheorem{conjecture}[theorem]{Conjecture}

\usepackage{color}

\title{{Commutator Equations in Finite Groups}}
\author{Kanto Irimoto}
\author{Enrique Torres-Giese}
\address{Trinity Western University, Langley BC, V2Y 1Y1 , Canada.}
\email{kanto.irimoto@twu.ca}
\email{enrique.torresgiese@twu.ca}

\begin{document}
\begin{abstract} 
The problem of finding the number of ordered commuting tuples of elements in a finite group
is equivalent to finding the size of the solution set of the system of equations determined 
by the commutator relations that impose commutativity among any pair of elements from
an ordered tuple. We consider this type of systems for the case of ordered triples and 
express the size of the solution set in terms of the irreducible characters of the group.  
The obtained formulas are natural extensions of Frobenius' character 
formula that calculates the number of ways a group element is a commutator of an 
ordered pair of elements in a finite group. We discuss how our formulas can be used to
study the probability distributions afforded by these systems of equations, and 
we show explicit calculations for dihedral groups.
\end{abstract}
\maketitle
\begin{center}
\today
\end{center}

\section{Introduction}
Systems of equations in finite groups have been studied using a variety of tools such as
probability and representation theory. One of the simplest systems of equations one can consider 
in a finite group $G$ 
is the one determined by the commutator equation: 
\[[x,y]=g, \]
where $g$ is a fixed element in $G$ and $[x,y]=x^{-1}y^{-1}xy$. 
According to Ore's Conjecture (proved in ~\cite{LOST}), this latter equation
always has a solution in finite non-abelian simple groups. If one considers the 
probability $P_2(g)$ that a randomly chosen ordered pair of elements in a finite group $G$ has commutator 
equal to $g$, then Ore's Conjecture is equivalent to saying that $P_2(g)$ is always positive for finite 
non-abelian simple groups. One of the key tools in proving Ore's Conjecture is
Frobenius's character formula which allows us to express the number of solutions to the equation $[x,y]=g$
in terms of the set $\mathrm{Irr}(G)$ of irreducible characters of $G$. More precisely, we have:
\begin{theorem}\label{frobenius}
[Frobenius] Suppose $g \in G$. Then
\[ |\{ (x,y) \in G\times G : [x,y]=g \} | =  \sum_{\chi \in \mathrm{Irr}(G)} \frac{|G|}{\chi(1)}\chi(g). 
\]
\end{theorem}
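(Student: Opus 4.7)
My plan is to regard $f(g) := |\{(x,y)\in G\times G : [x,y]=g\}|$ as a class function on $G$ and to expand it in the orthonormal basis of irreducible characters. That $f$ is a class function is immediate from the identity $z[x,y]z^{-1} = [zxz^{-1},\,zyz^{-1}]$, so conjugation by $z$ gives a bijection between the solution sets corresponding to $g$ and to $zgz^{-1}$. Writing $f = \sum_{\chi \in \mathrm{Irr}(G)} c_\chi \chi$, the coefficients come from the standard inner product of class functions, and after swapping the order of summation this becomes
\[ c_\chi \;=\; \frac{1}{|G|}\sum_{x,y \in G} \overline{\chi([x,y])}. \]

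The heart of the proof is then to evaluate $S(\psi) := \sum_{x,y}\psi([x,y])$ for an arbitrary irreducible character $\psi$; I would apply this with $\psi = \overline{\chi}$. To handle it, I would fix $y$ and invoke the Schur averaging identity
\[ \sum_{x \in G} \rho(x)^{-1}\, A\, \rho(x) \;=\; \frac{|G|\operatorname{tr}(A)}{\psi(1)}\, I, \]
valid for any linear operator $A$ on the representation space, where $\rho$ is an irreducible representation affording $\psi$. Choosing $A = \rho(y^{-1})$ and then right-multiplying by $\rho(y)$ produces
\[ \sum_{x\in G} \rho([x,y]) \;=\; \frac{|G|\,\psi(y^{-1})}{\psi(1)}\,\rho(y). \]
Taking traces and summing over $y$ collapses $S(\psi)$ to $\tfrac{|G|}{\psi(1)}\sum_{y}\psi(y^{-1})\psi(y) = \tfrac{|G|^{2}}{\psi(1)}$, by the first orthogonality relation.

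Substituting $\psi = \overline{\chi}$ and using $\overline{\chi}(1)=\chi(1)$ yields $c_\chi = |G|/\chi(1)$, which reassembles into the Frobenius formula. The only step that demands real care is the Schur averaging identity: one must observe that the operator $\sum_x \rho(x)^{-1} A \rho(x)$ intertwines $\rho$ with itself, so by Schur's lemma it is a scalar multiple of the identity, whose value is then forced by taking the trace. Everything else is routine bookkeeping with character orthogonality, and I expect this structural step to be the only obstacle worth flagging.
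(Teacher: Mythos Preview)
Your proof is correct. The paper does not actually supply its own proof of this statement---it simply cites~\cite{dw}---so there is nothing to compare against directly. That said, your Schur-averaging step, once you take traces, is precisely the identity~(\ref{formula}) (quoted from Isaacs~\cite{isaacs}) that the paper uses as its workhorse elsewhere; for instance, the computation $\sum_{x_2}\chi([x_1,x_2]) = \tfrac{|G|}{\chi(1)}\chi(x_1^{-1})\chi(x_1)$ in the proof of Theorem~\ref{thm_twov2} is exactly your fixed-$y$ calculation with $\psi=\chi$. So although the paper outsources the proof, your argument is fully in line with the character-theoretic machinery the paper relies on throughout.
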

A complete proof of Frobenius' formula can be found in~\cite{dw}. 
If $k(G)$ is the number of conjugacy classes of $G$, then Frobenius' formula yields $P_2(1) = k(G)/|G|$, 
which is the probability of randomly selecting a commuting ordered pair.

This latter probability can be generalized in a number of ways, for instance we can consider the 
probability $P_n(1)$ that a randomly chosen $n$-tuple of elements in $G$ 
commutes:
\[ P_n(1) = \frac{ | \{ (x_1,\ldots,x_n) \in G^n : [x_i,x_j]=1 
\text{ for } i<j\} |}{ |G|^n}.\]
The probability $P_n(1)$ was also studied in ~\cite{lescot}
under the name of commutativity degree.
Note that the set of commuting $n$-tuples $\{ (x_1,\ldots,x_n) \in G^n : [x_i,x_j]=1 
\text{ for } i<j\}$ can be identified with the set of group homomorphisms
$\mathrm{Hom}(\mathbb{Z}^n,G)$.  Since $\mathrm{Hom}(\mathbb{Z}^{n+1},G)\subseteq 
\mathrm{Hom}(\mathbb{Z}^n,G)\times G\subseteq G^{n+1}$ it follows that
\[ 1\geq P_2(1) \geq \cdots \geq P_n(1) \geq P_{n+1}(1)\geq \cdots \]
with equality between any two (and hence between all) if and only if $G$ is abelian. 
Moreover, since $\mathrm{Hom}(\mathbb{Z}^n,G)\times 1 \subset \mathrm{Hom}(\mathbb{Z}^{n+1},G)$ we see that
$P_n(1)/|G| \leq P_{n+1}(1)$, and thus $P_2(1) \leq P_{2+n}(1)|G|^n$.
The sets $\mathrm{Hom}(\mathbb{Z}^n,G)$ reflect a number of structural properties of $G$
as well as topological properties. These sets have been used in ~\cite{acg} and ~\cite{act} to construct
the classifying space $B_{com}G$ for commutative $G$-bundles, and in ~\cite{etg} to study 
further properties of $P_n(1)$ related to $B_{com}G$.

To extend the definition of $P_n(1)$ to other elements $g\in G$, we will consider the class function  
\[ f_n(g)=| \{ (x_1,\ldots,x_n)\in G^{n} : [x_i,x_j] = g \text{ for } i< j \} |,\]
and we will set $P_n(g) = f_n(g)/|G|^n$. This latter is the probability that a randomly
chosen $n$-tuple in $G^n$ is a solution to the system of commutator equations $[x_i,x_j]=g$, for all
$i<j$.

The calculation of the value of $f_2(g)$ is given by Frobenius' formula, which is in terms
of the irreducible characters of $G$ and their degree. In contrast, calculating the value of $f_n(g)$ 
for different elements $g \in G$ when $n>2$ requires a careful analysis of the lattice of centralizers in 
$G$ and their cosets as we will 
show with the case $n=3$. Recall that if $f$ is a class function, then we can write it as 
$f = \sum_{\chi \in \mathrm{Irr}(G)} \alpha_\chi \chi$, where 
\[ \alpha_\chi = \langle f,\chi \rangle = \frac{1}{|G|} \sum_{g\in G}f(g)\overline{\chi(g)}.\]

\begin{theorem}\label{thm_one} Suppose $\chi \in \mathrm{Irr}(G)$. 
If we let
\[
\theta_{\chi}(a) = \sum_{b\in G} |C_G(ab)b\cap C_G(a)|\chi([a,b]),
\]
and
\[
m_\chi = \sum_{a\in G} \theta_\chi(a) ,
\]
then $\theta_\chi$ is a class function and 
\[ f_3(g) = \frac{1}{|G|}\sum_{\chi\in \mathrm{Irr}(G)} m_\chi \chi(g) .\]
\end{theorem}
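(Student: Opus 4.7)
The plan is to show that $m_\chi = |G|\,\langle f_3, \chi\rangle$ for every $\chi \in \mathrm{Irr}(G)$, since $f_3$ is a class function and the desired formula is then just the Fourier expansion of $f_3$ in the orthonormal basis $\mathrm{Irr}(G)$. I would begin with the verification that $\theta_\chi$ is a class function: substituting $b \mapsto gbg^{-1}$ in the defining sum and using $C_G(gxg^{-1}) = g\,C_G(x)\,g^{-1}$ together with conjugation-invariance of $\chi$ shows $\theta_\chi(gag^{-1}) = \theta_\chi(a)$, so that $m_\chi$ is well-defined as a scalar.

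The main counting step is to parameterize $f_3(g)$ by fixing the first two coordinates and counting admissible third coordinates. For $(a,b)$ with $[a,b]=g$, the set $\{c : [a,c]=g\}$ is the left coset $C_G(a)\,b$, since any two solutions differ by an element of $C_G(a)$ and $b$ itself is a solution ($b^{-1}ab = a[a,b]$). Writing $c = wb$ with $w \in C_G(a)$ and applying the identity $[b, wb] = b^{-1}[b,w]\,b$, the additional requirement $[b,c]=g$ reduces to a single equation in $w$ that a direct calculation shows is equivalent to $wb \in C_G(ab^{-1})$. Hence the number of valid $c$ is $|C_G(a)\,b \cap C_G(ab^{-1})|$ and
\[
f_3(g) \;=\; \sum_{\substack{(a,b)\in G^2 \\ [a,b]=g}} \left|C_G(a)\,b \cap C_G(ab^{-1})\right|.
\]

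To reconcile this with the formula for $m_\chi$, I would compute
\[
|G|\,\langle f_3, \chi\rangle \;=\; \sum_{(a,b)\in G^2} \left|C_G(a)\,b \cap C_G(ab^{-1})\right|\,\overline{\chi([a,b])}
\]
and transform it. Substituting $b \mapsto b^{-1}$ converts the centralizer data into $|C_G(a)\,b^{-1} \cap C_G(ab)|$, and the identity $[a,b^{-1}] = b\,[a,b]^{-1}\,b^{-1}$ combined with $\chi$ being a class function gives $\overline{\chi([a,b^{-1}])} = \chi([a,b])$. The remaining equality $|C_G(a)\,b^{-1} \cap C_G(ab)| = |C_G(ab)\,b \cap C_G(a)|$ is then immediate from the bijection $x \mapsto xb$ between the two sets, yielding $|G|\,\langle f_3, \chi\rangle = m_\chi$. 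The main obstacle I expect is the coset and commutator bookkeeping around the $b \mapsto b^{-1}$ substitution: obtaining $ab$ rather than $ab^{-1}$ in the final formula hinges on the conjugacy $[a,b^{-1}] \sim [a,b]^{-1}$ and on the left/right coset bijection being set up with the correct multiplications, so a single misplaced inverse would break the match.
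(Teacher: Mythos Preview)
Your argument is correct and follows essentially the same strategy as the paper: parameterize the solution triples by two of the three data, count the remaining one as the size of an intersection of a centralizer and a centralizer coset, and then read off $\langle f_3,\chi\rangle$. The difference is purely in bookkeeping. The paper first passes to the set $T_g=\{(x,y,z):[x,y]=[x,z]=[z,y]=g\}$ and parameterizes by $(z,c)$ with $c=yz^{-1}$, which makes the count come out directly as $|C_G(cz)z\cap C_G(c)|$ with weight $\chi([z,cz])=\chi([z,c])=\overline{\chi([c,z])}$; renaming $(c,z)\to(a,b)$ and conjugating gives $m_\chi$ immediately. You instead fix the first two coordinates $(a,b)$, obtain $|C_G(a)b\cap C_G(ab^{-1})|$, and then need the substitution $b\mapsto b^{-1}$ together with the bijection $x\mapsto xb$ to land on the stated form of $\theta_\chi$. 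Your route is arguably the more natural starting point, at the cost of one extra change of variables; the paper's choice of $c=yz^{-1}$ is engineered so that no such final substitution is needed. Both the commutator identity $[a,b^{-1}]=b[a,b]^{-1}b^{-1}$ and the coset bijection you invoke are correct, so the match with $m_\chi$ goes through.
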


The calculation of $f_3$ could be pretty involved, but it is possible to get an upper bound
for the value of $f_3(g)$ by considering the class function:
\[ t_3(g) = |\{(x,y,z)\in G^3: [x,y]=g=[x,z] \}| \]

Of course, $f_3(g)\leq t_3(g)$, and the calculation of $t_3$ in terms of the set $\mathrm{Irr}(G)$ is 
much simpler.

\begin{theorem}\label{thm_two} Suppose that $x_1,\ldots,x_{k(G)}$ is a full set of representatives of the
conjugacy classes of $G$. If $\chi \in \mathrm{Irr}(G)$, then
\[ \langle t_3, \chi \rangle = \sum_{i=1}^{k(G)} \frac{|G|}{\chi(1)}|\chi(x_i)|^2  
\]
\end{theorem}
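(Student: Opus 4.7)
The plan is to reduce the triple count $t_3(g)$ to a sum involving a single commutator, and then extract $\langle t_3,\chi\rangle$ using a standard refinement of Frobenius's formula (Theorem \ref{frobenius}), namely the identity $\sum_{y\in G}\chi([x,y]) = \frac{|G|}{\chi(1)}|\chi(x)|^2$.

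First I would define $h_x(g) = |\{y \in G : [x,y] = g\}|$ so that $t_3(g) = \sum_{x\in G} h_x(g)^2$. A short fiber analysis shows that, for each fixed $x$, the map $y\mapsto [x,y]$ has fibers that are either empty or right cosets of $C_G(x)$, since $[x,y]=[x,y']$ iff $y^{-1}xy=(y')^{-1}xy'$ iff $y'y^{-1}\in C_G(x)$. Therefore $h_x(g) \in \{0, |C_G(x)|\}$, whence $h_x(g)^2 = |C_G(x)|\,h_x(g)$, giving the collapsing identity
\[ t_3(g) = \sum_{x\in G} |C_G(x)|\,h_x(g). \]

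Second, I would take the inner product with $\chi$ to get
\[ \langle t_3,\chi\rangle = \frac{1}{|G|}\sum_{x\in G}|C_G(x)|\sum_{y\in G}\overline{\chi([x,y])}. \]
To evaluate the inner $y$-sum I would establish the commutator identity above: write $\chi([x,y]) = \chi(x^{-1}\cdot y^{-1}xy)$ and observe that $y^{-1}xy$ sweeps the class $\mathrm{Cl}(x)$ with multiplicity $|C_G(x)|$, so $\sum_y\chi([x,y]) = |C_G(x)|\sum_{z\in\mathrm{Cl}(x)}\chi(x^{-1}z)$. Applying the classical fact that the class sum $\sum_{z\in\mathrm{Cl}(x)} z$ acts on an irreducible module affording $\chi$ as the scalar $|\mathrm{Cl}(x)|\chi(x)/\chi(1)$, and taking trace after multiplication by $\rho(x^{-1})$, gives $\sum_{z\in\mathrm{Cl}(x)}\chi(x^{-1}z) = \chi(x^{-1})\cdot|\mathrm{Cl}(x)|\chi(x)/\chi(1)$. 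Since $\chi(x^{-1}) = \overline{\chi(x)}$ and $|\mathrm{Cl}(x)|\cdot|C_G(x)| = |G|$, this produces $\frac{|G|}{\chi(1)}|\chi(x)|^2$; the same identity holds with $\overline{\chi}$ in place of $\chi$, since $\overline{\chi}\in\mathrm{Irr}(G)$ as well.

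Substituting back yields $\langle t_3,\chi\rangle = \frac{1}{\chi(1)}\sum_{x\in G}|C_G(x)|\,|\chi(x)|^2$. Since both $|C_G(x)|$ and $|\chi(x)|^2$ are constant on conjugacy classes, grouping the sum over the representatives $x_1,\ldots,x_{k(G)}$ and using $|C_G(x_i)|\cdot|\mathrm{Cl}(x_i)|=|G|$ collapses the sum to $\sum_{i=1}^{k(G)}\frac{|G|}{\chi(1)}|\chi(x_i)|^2$, as desired. The only real obstacle is establishing the commutator character sum $\sum_y\chi([x,y]) = \frac{|G|}{\chi(1)}|\chi(x)|^2$; once it is in hand, the remainder of the argument is direct calculation and conjugacy-class bookkeeping.
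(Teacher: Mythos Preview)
Your proof is correct and follows essentially the same route as the paper. The paper proves the slightly more general Theorem~\ref{thm_twov2} (for all $n$) by the same reduction $t_n(g)=\sum_{x,y:[x,y]=g}|C_G(x)|^{n-2}$ and then evaluates $\sum_{y}\chi([x,y])$ via the identity $\chi(g)\chi(h)=\frac{\chi(1)}{|G|}\sum_{z}\chi(gh^{z})$ (Isaacs, Problem~3.12), which is exactly the class-sum-as-scalar computation you wrote out; specializing to $n=3$ and recognizing $|C_G(x)|=\vartheta(x)$ gives your final conjugacy-class sum.
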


As an application of $f_3$ we show that:

\begin{theorem}\label{thm_three}
If $n\geq 3$ and $g$ is an arbitrary element in the alternating group $A_n$, then the system of commutator equations:
\begin{align*}
[x_1,x_2] & = g \\
[x_1,x_3] & = g \\
[x_2,x_3] & = g 
\end{align*}
is always consistent in the symmetric group $\Sigma_n$.
\end{theorem}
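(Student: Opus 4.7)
The plan is to exhibit an explicit solution triple for each $g \in A_n$, rather than to bound $f_3(g)$ via Theorem \ref{thm_one} and character estimates. The starting point is a key observation: for any $a \in G$ and any involution $b \in G$, the triple $(a, b, ab)$ has all three pairwise commutators equal to $[a,b]$. Indeed,
\[
[a, ab] = a^{-1}(ab)^{-1}a(ab) = a^{-1}b^{-1}ab = [a,b],
\]
and when $b^2 = 1$,
\[
[b, ab] = b^{-1}(ab)^{-1}b(ab) = b^{-2}a^{-1}bab = a^{-1}bab = a^{-1}b^{-1}ab = [a,b].
\]
This reduces the theorem to showing that every $g \in A_n$ can be written as $[a,b]$ with $a \in \Sigma_n$ and $b \in \Sigma_n$ an involution.

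I would then prove this reduced claim by constructing $a$ and $b$ from the disjoint-cycle decomposition of $g$. The case $g = 1$ is handled by $a = b = 1$. For nontrivial $g$, the parity constraint forces the number of even-length cycles of $g$ to be even, so these cycles can be paired up while odd-length cycles (of length at least $3$) are treated individually. Each group of cycles is handled by a local pair $(a_i, b_i)$ supported on the union of the corresponding cycle-supports; since the groups have disjoint supports, the products $a = \prod a_i$ and $b = \prod b_i$ satisfy $b^2 = 1$ and $[a,b] = \prod [a_i, b_i] = g$.

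For a single odd $r$-cycle $c = (i_1\, i_2\, \cdots\, i_r)$ with $r \geq 3$, I would take $a_c = c^{(r-1)/2}$ and let $b_c = (i_2\, i_r)(i_3\, i_{r-1})\cdots$ be the involution reversing $c$; since $b_c c b_c^{-1} = c^{-1}$, one gets $b_c^{-1}a_c b_c = c^{-(r-1)/2}$ and hence $[a_c, b_c] = a_c^{-1}\cdot c^{-(r-1)/2} = c^{-(r-1)} = c$ using $c^r = 1$. For a pair $c_1, c_2$ of even-length cycles of the \emph{same} length $2s$, let $\phi$ be a bijection from $\mathrm{supp}(c_1)$ to $\mathrm{supp}(c_2)$ satisfying $\phi c_1 \phi^{-1} = c_2^{-1}$, let $\tau$ be the involution swapping each $x \in \mathrm{supp}(c_1)$ with $\phi(x)$, and take $a = c_2^{-1}$. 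Then $\tau c_2^{-1}\tau^{-1} = c_1$, and a short manipulation gives $[a, \tau] = c_2\cdot c_1 = c_1 c_2$.

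The main obstacle will be the case of a pair of even-length cycles of \emph{different} lengths, where no natural swap involution exists. The smallest instance, $g = (12)(3456) \in A_6$, is already illustrative: the ad hoc solution $a = (13)(2645)$, $b = (36)(45)$ can be verified to satisfy $[a,b] = g$, but it cannot be produced by the swap construction. For a general mixed-length pair of cycles of lengths $2s$ and $2t$ with $s \neq t$, I would parametrize $a$ through the condition $bab^{-1} = ag$ (which, for $b^2 = 1$, is equivalent to $[a,b] = g$) and establish, by combinatorial bookkeeping of how the transpositions and fixed points of $b$ must distribute across the union of the two cycle-supports, that a valid choice of values $a(i)$ always exists. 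The detailed combinatorial analysis required here is where most of the work of the proof lies.
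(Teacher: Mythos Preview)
Your reduction is correct and is genuinely different from the paper's argument. The paper proceeds by induction on $n$: it writes $g$ as the product of its even-length cycles (an element of $A_r$) times the product of its odd-length cycles including fixed points (an element of $A_t$), applies the inductive hypothesis to $\Sigma_r$ and $\Sigma_t$ separately, and glues the two triples via Lemma~\ref{commutators_and_products}; the only case left is when $g$ is a single $n$-cycle, and that is handled by the embedding $D_{2n}\le\Sigma_n$ together with Theorem~\ref{f3rotations}. Your approach instead aims for a single explicit construction, and the key observation that $(a,b,ab)$ solves the system whenever $b^2=1$ is both correct and pleasant; your treatment of a single odd cycle and of a pair of even cycles of equal length is complete. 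In fact your odd-cycle construction is exactly a witness living in the dihedral subgroup, so in that case your method and the paper's coincide.

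The gap is precisely where you place it, and it is a genuine one. For a pair of disjoint even cycles of lengths $2s\neq 2t$ with $2s+2t=n$ and no fixed points (the first instance being $(12)(3456)\in A_6$), neither of your two local constructions applies, there are no spare points to borrow, and the reversal idea behind the odd-cycle case is unavailable because such a $g$ has no square root in $\Sigma_n$. You give one verified example and then defer the general case to ``combinatorial bookkeeping'' of the equation $bab^{-1}=ag$; as written this is a promissory note, not a proof, and it is exactly the case that carries the content of the theorem. It is worth noting that the paper's induction, read literally, also fails to reduce this same element: $(12)(3456)\in A_6$ is not a $6$-cycle, yet it cannot be split into two even permutations on proper disjoint supports, so the difficulty you have isolated is intrinsic rather than an artifact of your method. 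To complete your approach you must either produce a general construction for the mixed $2s$--$2t$ pair (for example an explicit $(2s{+}2t)$-cycle $a$ and involution $b$ with $bab^{-1}=a\cdot c_1c_2$), or absorb these cases into an enlarged family of base cases.
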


\begin{remark}\normalfont
Based on a number of calculations in GAP~\cite{GAP4} we have conjectured that the system in Theorem~\ref{thm_three} is always consistent
in $A_n$ when $n\geq 5$. Further calculations also seem to indicate that this may be very well the case for any 
finite non-abelian simple group. This would be a natural extension of Ore's conjecture (see Remark~\ref{ore_extending}).  
\end{remark}

The organization of this paper is as follows: in Section 2 we prove the character formulas for $f_3$ and $t_3$; 
in Section 3 we calculate the coefficients of $f_3$ and $t_3$ for dihedral groups; and in Section 4 
we revisit some upper bounds for $P_n(1)$, obtain estimates for $P_3(g)$, and discuss some properties of the
probability distribution afforded by $f_3$.


\section{Character Formulas}

In order to write $t_3$ in terms of the irreducible characters of $G$ we will use the following 
formula (see Problem 3.12 of ~\cite{isaacs}):
\begin{equation}\label{formula}
\chi(g)\chi(h) = \frac{\chi(1)}{|G|}\sum_{z\in G}\chi(gh^z),
\end{equation}
where $\chi\in \mathrm{Irr}(G)$ and $h^z= z^{-1}hz$. Our first result in this section is a slight generalization of Theorem~\ref{thm_two}. 

\begin{theorem}\label{thm_twov2} 
Let $t_n(g) = | \{ (x_1,\ldots,x_n)\in G^n \colon [x_1,x_i]=g \text{ for } i>1 \}|$, and
suppose that $\vartheta$ is the character of the conjugation action of $G$ on itself.
If $\chi$ is an irreducible character of $G$, then
\[ \langle t_n, \chi \rangle = \frac{|G|}{\chi(1)} \langle \vartheta^{n-2}\cdot \chi,\chi\rangle    
\]
and hence $t_n$ is a character of $G$.
\end{theorem}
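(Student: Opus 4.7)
The plan is to first reduce $t_n(g)$ to a sum that is linear in the number of commutator solutions, and then apply formula~(\ref{formula}).

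First I would fix $x_1$ and count the $y$'s with $[x_1,y]=g$. The equation rearranges to $y^{-1}x_1 y = x_1 g$, so this count equals $|C_G(x_1)|$ if $x_1 g$ lies in the conjugacy class of $x_1$, and $0$ otherwise. Denoting this count by $\psi_{x_1}(g)$, the key observation is that because $\psi_{x_1}(g)\in\{0,|C_G(x_1)|\}$, we have $\psi_{x_1}(g)^{n-1} = |C_G(x_1)|^{n-2}\psi_{x_1}(g)$ for every $g$. Summing over $x_1$ therefore collapses the $(n-1)$st-power formula $t_n(g)=\sum_{x_1}\psi_{x_1}(g)^{n-1}$ into the linear expression
\[
t_n(g) = \sum_{x,y\in G}|C_G(x)|^{n-2}\,\mathbf{1}_{[x,y]=g}.
\]

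Next I would take the inner product with $\chi$ and interchange the order of summation, obtaining
\[
\langle t_n,\chi\rangle = \frac{1}{|G|}\sum_{x\in G}|C_G(x)|^{n-2}\sum_{y\in G}\overline{\chi([x,y])}.
\]
To evaluate the inner sum, I would apply formula~(\ref{formula}) with $g=x$ and $h=x^{-1}$, using the identity $x z^{-1} x^{-1} z = [x^{-1},z]$; this gives $\sum_z \chi([x^{-1},z]) = \frac{|G|}{\chi(1)}|\chi(x)|^2$, and substituting $x\mapsto x^{-1}$ (or conjugating, since $|\chi(x)|^2\in\mathbb{R}$) yields $\sum_y \overline{\chi([x,y])} = \frac{|G|}{\chi(1)}|\chi(x)|^2$. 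Plugging back and recognizing that the conjugation character satisfies $\vartheta(x)=|C_G(x)|$, one arrives at
\[
\langle t_n,\chi\rangle = \frac{1}{\chi(1)}\sum_{x\in G}|C_G(x)|^{n-2}|\chi(x)|^2 = \frac{|G|}{\chi(1)}\langle \vartheta^{n-2}\cdot\chi,\chi\rangle,
\]
which is the asserted formula. To conclude that $t_n$ is a character, I would note that $\vartheta^{n-2}\cdot\chi$ is a character (a product of characters), so $\langle \vartheta^{n-2}\cdot\chi,\chi\rangle\in\mathbb{Z}_{\geq 0}$, and since $|G|/\chi(1)$ is a positive integer, every Fourier coefficient $\langle t_n,\chi\rangle$ is a non-negative integer.

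The step I expect to be the most delicate is the linearization $\psi_{x}(g)^{n-1}=|C_G(x)|^{n-2}\psi_{x}(g)$: once $t_n$ is expressed as a linear combination of commutator indicators, everything downstream is a direct application of~(\ref{formula}). Without this collapse one would be forced to compute moments of a function that is almost, but not exactly, an indicator, which does not fit any standard character identity.
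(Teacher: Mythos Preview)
Your proof is correct and follows essentially the same route as the paper. The only cosmetic difference is in how the reduction $t_n(g)=\sum_{x,y}|C_G(x)|^{n-2}\mathbf{1}_{[x,y]=g}$ is justified: the paper observes directly that once $(x_1,x_2)$ with $[x_1,x_2]=g$ is chosen, each remaining $x_i$ ranges freely over the coset $C_G(x_1)x_2$, whereas you reach the same identity via the ``power collapse'' $\psi_{x_1}(g)^{n-1}=|C_G(x_1)|^{n-2}\psi_{x_1}(g)$; thereafter both arguments apply formula~(\ref{formula}) and finish identically.
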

\begin{proof}
Note that $[x_1,x_2]=[x_1,x_i]$ if and only if $x_2 x_i^{-1} \in C_G(x_1)$ for all $i\geq 2$. This implies that
the set that affords the value of $t_n(g)$ is determined by $g$ and pairs $(x_1,x_2)$ with $[x_1,x_2]=g$. 
Then using (\ref{formula}) we have
\begin{align*}
\langle t_n, \overline{\chi} \rangle = \frac{1}{|G|}\sum_{g\in G} t_n(g)\chi(g) & = \frac{1}{|G|} \sum_{x_1\in G}\sum_{x_2\in G} |C_G(x_1)|^{n-2} \chi([x_1,x_2]) \\
 & = \frac{1}{|G|} \sum_{x_1\in G}|C_G(x_1)|^{n-2}\frac{|G|}{\chi(1)}\chi(x_1^{-1})\chi(x_1) \\
 & = \frac{|G|}{\chi(1)}\left( \frac{1}{|G|} \sum_{x_1\in G}(\vartheta^{n-2}(x_1)\chi(x_1))\overline{\chi(x_1)} \right).
\end{align*}
So $\langle t_n, \overline{\chi} \rangle$ is $|G|/\chi(1)$ times the multiplicity of $\chi$ in $\vartheta^{n-2}\cdot\chi$, which is
the product of two non-negative integers. 
Thus $\langle t_n, \overline{\chi} \rangle = \overline{\langle t_n, \overline{\chi} \rangle }= 
\langle t_n, \chi \rangle$. This completes the proof.
\end{proof}

\begin{remark}\normalfont
If we fix $g \in G$, then there is a bijection between the sets 
\[\{ (x,y,z)\in G^3 : [x,y]=[x,z]=[y,z]=g \} \] 
and
\[ \{ (x,y,z)\in G^3 : [x,y]=[x,z]=[z,y]=g \} \] 
given $(x,y,z) \mapsto (x,z,y)$. 
\end{remark}

Now we proceed to prove our formula for $f_3$. \\

\noindent {\bf Proof of Theorem~\ref{thm_one}:}
Let $T_g$ be the set $\{ (x,y,z)\in G^3 : [x,y]=[x,z]=[z,y]=g \}$. Then according to the previous remark $f_3(g) = |T_g|$.
Suppose that $(x,y,z)\in \sqcup_{g\in G} T_g$. Then, the condition $[x,y]=[x,z]$ holds if and only if $y=cz$ for some $c\in C_G(x)$, or equivalently 
$y=cz$ for some $c$ such that $x\in C_G(c)$. 
In addition, $[x,y]=[z,y]$ if and only if $[y,x]=[y,z]$, if and only if
$x\in C_G(y)z$. Then in order to form a triple $(x,y,z)$ in a set $T_g$ we can first pick any two elements
$z,c\in G$, then set $y=cz$ and find $x\in G$ such that $x\in C_G(cz)z \cap C_G(c)$. Then we have
\[
\sum_{g\in G} f_3(g)\chi(g) = \sum_{c\in G}\sum_{z\in G} |C_G(cz)z\cap C_G(c)|\chi([z,cz]). 
\]
Since $[z,cz] = [z,z][z,c]^z$ and $\chi( [z,c]) = \overline{\chi( [c,z] )}$, it follows that we can write this latter as:
\[
|G|\langle f_3,\chi \rangle = \sum_{g\in G} f_3(g)\overline{\chi(g)} = \sum_{a\in G}\sum_{b\in G} |C_G(ab)b\cap C_G(a)|\chi([a,b]) 
\]
Hence
\[
\langle f_3,\chi\rangle = \frac{1}{|G|} \sum_{a \in G}\theta_\chi(a) = \frac{m_\chi}{|G|}, 
\]
as wanted. To show that $\theta_\chi$ is a class function fix an element $w$ in $G$ and note that:
\begin{align*} 
\theta(a^w) & = \sum_{b\in G} |C_G(a^w b)b\cap C_G(a^w)|\chi([a^w,b]) \\
 & = \sum_{b^w\in G} |C_G(a^w b^w)b^w\cap C_G(a^w)|\chi([a^w,b^w]) \\
 & = \theta(a)
\end{align*}
\hfill $\square$

When studying systems of commutator equations in a group $G$ there will be times when we will consider 
solutions consisting of tuples of group elements in specific subgroups of $G$. This motivates the
following class function: let $H$ be a subgroup of $G$ and let $f_{n,H\leq G}\colon G\to\mathbb{N}$ given by
\[
f_{n,H\leq G}(g)= | \{ (x_1,\ldots,x_n)\in H^{n} : [x_i,x_j] = g \text{ for } i< j \} |.
\]
Note that $f_{n,G\leq G} = f_n$. When $H=G$ we will write $f_{n,G}$ or simply $f_n$.
Now we can prove multiple properties of the functions $f_n$ and $t_n$.

\begin{proposition}\label{properties} 
Suppose that $g\in G$, and that $H$ and $K$ are subgroups of $G$. We have the following:
\begin{enumerate}
\item If $H\leq K$, then $f_{n,H\leq G}(g)\leq f_{n,K\leq G}(g)$.
\item If $\chi \in \mathrm{Irr}(G)$ and we let
\[
\tau_\chi(b) = \sum_{a\in G} |C_G(ab)b\cap C_G(a)|\chi([a,b]),
\]
then $\tau_\chi$ is a class function and $\tau_\chi(b^{-1}) = \tau_{\overline{\chi}}(b) = \overline{\tau_\chi(b)}$.
\item $m_\chi = \sum_{b\in G} \tau_\chi(b)$ and $m_\chi \in \mathbb{R}$.
\item Both $f_n$ and $t_n$ are invariant under isoclinism between groups of the same order. In particular, $P_n$ is invariant
under isoclinism. 
\item $t_n(g) \leq t_n(1)$.
\item $f_3(g) \leq t_3(g)$, and if $g\neq 1$ then $f_3(g) \leq t_3(g) - f_2(g)$.
\item $f_n(g^{-1}) = f_n(g)$. 
\item $t_n(g^{-1}) = t_n(g)$.
\end{enumerate}
\end{proposition}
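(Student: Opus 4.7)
The plan is to handle the items in roughly increasing order of difficulty, bundling the purely combinatorial ones first. Item (1) is immediate from the inclusion $H^n\subseteq K^n$ of defining sets. For item (7), I use the involution $(x_1,\ldots,x_n)\mapsto(x_n,\ldots,x_1)$: since $[x_{n+1-i},x_{n+1-j}]=[x_{n+1-j},x_{n+1-i}]^{-1}$ for $i<j$, this swaps the defining set of $f_n(g)$ with that of $f_n(g^{-1})$. For item (5), I first recast $t_n(g)=\sum_{(x_1,x_2):[x_1,x_2]=g}|C_G(x_1)|^{n-2}$ by noting that $[x_1,x_2]=[x_1,x_i]$ iff $x_i\in C_G(x_1)x_2$, so each of $x_3,\ldots,x_n$ ranges over one coset of $C_G(x_1)$; since for each $x_1$ the $x_2$-set is either empty or a single coset of $C_G(x_1)$, we get $t_n(g)\leq \sum_{x_1}|C_G(x_1)|^{n-1}=t_n(1)$. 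Item (6) splits into the trivial inclusion of defining sets for $f_3(g)\leq t_3(g)$ and, for $g\neq 1$, the observation that the $f_2(g)$ triples $(x,y,y)$ with $[x,y]=g$ lie in $t_3(g)\setminus f_3(g)$ because $[y,y]=1\neq g$.

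For item (2), the class-function property of $\tau_\chi$ follows by the same conjugation-substitution used for $\theta_\chi$ in the proof of Theorem~\ref{thm_one}. The inverse identity is the crux: starting from $\tau_\chi(b^{-1})$ and substituting $a=a'b$, the centralizer factor becomes $|C_G(a')b^{-1}\cap C_G(a'b)|$ and the commutator becomes $[a'b,b^{-1}]$. A direct expansion gives $[a'b,b^{-1}]=b^{-1}a'^{-1}ba'=[a',b]^{-1}$, so $\chi([a'b,b^{-1}])=\overline{\chi([a',b])}$; and the explicit bijection $y\mapsto yb$ identifies $C_G(a')b^{-1}\cap C_G(a'b)$ with $C_G(a'b)b\cap C_G(a')$. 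Combining yields $\tau_\chi(b^{-1})=\overline{\tau_\chi(b)}$, and $\overline{\tau_\chi(b)}=\tau_{\overline\chi}(b)$ is automatic since the centralizer sizes are real. Item (3) is then Fubini: the double sum defining $m_\chi$ can be collapsed in either variable, giving $m_\chi=\sum_b\tau_\chi(b)$, and summing the identity $\tau_\chi(b^{-1})=\overline{\tau_\chi(b)}$ over $b$ forces $\overline{m_\chi}=m_\chi$. For item (8), I invoke Theorem~\ref{thm_twov2}: the Fourier coefficients $\langle t_n,\chi\rangle=\frac{|G|}{\chi(1)}\langle\vartheta^{n-2}\chi,\chi\rangle$ are real (because $\vartheta$ is real-valued and the inner product reduces to $\frac{1}{|G|}\sum_g \vartheta(g)^{n-2}|\chi(g)|^2$), and $t_n$ is $\mathbb{N}$-valued, so $t_n(g^{-1})=\sum_\chi\langle t_n,\chi\rangle\overline{\chi(g)}=\overline{t_n(g)}=t_n(g)$.

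For item (4), the plan is to exploit that an isoclinism $G\sim G'$ provides isomorphisms $\alpha\colon G/Z(G)\to G'/Z(G')$ and $\beta\colon [G,G]\to[G',G']$ compatible with commutators. Because $[x_i,x_j]$ depends only on the cosets $x_iZ(G), x_jZ(G)$, the defining set of $f_n(g)$ decomposes as a disjoint union of $Z(G)^n$-cosets; its size is $|Z(G)|^n$ times the number of coset tuples in $(G/Z(G))^n$ whose pairwise commutators equal $g$, and $\alpha,\beta$ put these in bijection with their $G'$-counterparts at $\beta(g)$. The hypothesis $|G|=|G'|$ forces $|Z(G)|=|Z(G')|$, so the totals match, and the identical argument handles $t_n$; $P_n$-invariance follows by dividing by $|G|^n$. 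The main obstacle I anticipate is item (2): verifying $[a'b,b^{-1}]=[a',b]^{-1}$ and lining up the centralizer-set bijection after the substitution $a=a'b$ is where the algebra is most easily miscarried, but once that identity is in hand the realness of $m_\chi$ in (3), the character-theoretic step in (8), and the isoclinism bookkeeping in (4) chain through.
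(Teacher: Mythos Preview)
Your proof is correct throughout, and for items (1), (3), (6), (7), (8) it matches the paper essentially line for line. The noteworthy differences are in items (2), (4), and (5).

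For item (2), the paper first rewrites $|C_G(ab^{-1})b^{-1}\cap C_G(a)|=|C_G(ab^{-1})\cap C_G(a)b|$, then substitutes $a\mapsto a^{-1}$, then sets $u=a^{-1}b^{-1}$, and finally applies the conjugation identities $[ub,b^{-1}]=[u,b^{-1}]^{b}$ and $[u,b^{-1}]=[b,u]^{b^{-1}}$. Your single substitution $a=a'b$ together with the direct identity $[a'b,b^{-1}]=b^{-1}a'^{-1}ba'=[a',b]^{-1}$ reaches the same endpoint in one move; it is cleaner and avoids the intermediate conjugation lemmas.

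For item (5), the paper argues character-theoretically: by Theorem~\ref{thm_twov2} the coefficients $a_\chi=\langle t_n,\chi\rangle$ are non-negative integers, so $t_n(g)\le\sum_\chi a_\chi\chi(1)=t_n(1)$. Your argument is purely combinatorial, using that for fixed $x_1$ the solution set $\{x_2:[x_1,x_2]=g\}$ is empty or a single $C_G(x_1)$-coset, whence $t_n(g)\le\sum_{x_1}|C_G(x_1)|^{n-1}=t_n(1)$. Your route is more elementary and self-contained, not relying on the character formula; the paper's route has the side benefit of reusing the structural fact that $t_n$ is a genuine character.

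For item (4), the paper simply defers to Lescot's argument, whereas you spell out the isoclinism bookkeeping explicitly (the $Z(G)^n$-coset decomposition and the bijection induced by $(\alpha,\beta)$). Your version is more informative, and the step $|G|=|G'|\Rightarrow|Z(G)|=|Z(G')|$ is exactly what is needed.
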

\begin{proof} 
\begin{enumerate}
\item This statement is straightforward. 
\item To prove that $\tau_\chi$ is a class function 
we can proceed as we did for $\theta_\chi$ in Theorem~\ref{thm_one}, 
so we leave this to the interested reader. 
Note that for any two non-empty subsets $A,B$ of $G$ and any element $v$ in $G$ we
have $|Av \cap B| = |A\cap Bv^{-1}|$. Then
\begin{align*}
\tau_\chi(b^{-1}) & = \sum_{a\in G} |C_G(ab^{-1})b^{-1}\cap C_G(a)|\chi([a,b^{-1}]) \\
 & = \sum_{a\in G} |C_G(ab^{-1}) \cap C_G(a)b|\chi([a,b^{-1}]) \\
 & =\sum_{a^{-1}\in G} |C_G(a^{-1}b^{-1})\cap C_G(a^{-1})b|\chi([a^{-1},b^{-1}]) \\
\end{align*}
So if we set $u = a^{-1}b^{-1}$ and use the identities $[ub,b^{-1}] = [u,b^{-1}]^b$ and
$[u,b^{-1}] = [b,u]^{b^{-1}}$, we get:
\begin{align*}
\tau_\chi(b^{-1}) & =\sum_{u\in G} |C_G(u) \cap C_G(ub)b|\chi([ub,b^{-1}]) \\
 & =\sum_{u\in G} |C_G(u)\cap C_G(ub)b|\chi([u,b^{-1}]) \\
 & =\sum_{u\in G} |C_G(u)\cap C_G(ub)b|\chi([b,u]) \\
 & =\sum_{u\in G} |C_G(u)\cap C_G(ub)b|\overline{\chi([u,b])} \\
 & = \overline{\tau_\chi(b)} = \tau_{\overline{\chi}}(b).
\end{align*}
\item That $m_\chi = \sum_{b\in G} \tau_\chi(b)$ follows from the definition of $m_\chi$. 
Moreover, we have:
\[
\overline{m_\chi} = \sum_{b\in G}\overline{\tau_\chi(b)} = \sum_{b^{-1}\in G}\overline{\tau_\chi(b^{-1})} =
\sum_{b^{-1}\in G}\tau_\chi(b)= m_\chi.
\]
\item To prove that $f_n$ is invariant under isoclinism between groups of the same order we refer the reader to the 
proof in~\cite{lescot} of the invariance of the value of $P_n(1)$. The argument for $t_n$ is the same.

\item If we write $t_n = \sum_{\chi \in \mathrm{Irr}(G)} a_\chi \chi$, then according to Theorem~\ref{thm_twov2} 
the coefficients $a_\chi$ 
are non-negative integers. Thus, 
\[ t_n(g) \leq \sum_{\chi \in \mathrm{Irr}(G)} a_\chi \chi(1) = t_n(1) ,\]
as wanted. 

\item The first inequality is straightforward, while for the second note that if $g\neq 1$ then a triple $(x,y,z)$ 
satisfying $[x,y]=[x,z]=[y,z]=g$ cannot have $y=z$. The triples that do satisfy $y=z$ are of the form $(x,y,y)$ and can be
counted by the function $f_2(g)$. These latter triples can be knocked off from the set of triples that are counted by $t_3$ yielding
a set containing the set of triples that are counted by $f_3$.

\item The map given by $(a_1,\ldots,a_n)\mapsto (a_n,\ldots,a_1)$ defines a bijection between the set $\{(x_1,\ldots,x_n)\colon [x_i,x_j]=g \text{ for }i<j\}$ and the set
$\{ (x_1,\ldots,x_n): [x_i,x_j] = g^{-1} \text{ for }i<j \}$. Their size is precisely $f_n(g)$ and $f_n(g^{-1})$. 
\item If we write $t_n = \sum_{\chi \in \mathrm{Irr}(G)} a_\chi \chi$, then
\[ t_n(g^{-1}) = \sum_{\chi \in \mathrm{Irr}(G)} a_\chi \overline{\chi(g)} = \overline{\sum_{\chi \in \mathrm{Irr}(G)} a_\chi \chi(g)} = 
\overline{t_n(g)} = t_n(g). \]
\end{enumerate}
\end{proof}

\begin{example}\label{coeff_a5}\normalfont
Using Proposition~\ref{properties} one can simplify the calculations yielding the coefficients of $f_3$ and $t_3$. 
For the alternating group $A_5$ we have:
\begin{enumerate}
\item $f_2 = 60\chi_1 + 20\chi_2 + 20\chi_3 + 15\chi_4 + 12\chi_5$.
\item $f_3 = 40\chi_1 + 64\chi_2 + 64\chi_3 + 84\chi_4 + 112\chi_5$.
\item $t_3 = 300\chi_1 + 260\chi_2 + 260\chi_3 + 285\chi_4 + 324\chi_5$. 
\item $P_2(1)=1/12$ and $P_3(1) = 11/1800$.
\end{enumerate}
Here $\chi_1$ is the character of the trivial representation, $\chi_2$ and $\chi_3$ have degree 3, 
whereas $\chi_4$ and $\chi_5$ have 
degree 4 and 5 respectively.
\end{example}

\begin{remark}\normalfont
We have done numerous calculations in GAP and we have conjectured that $f_3$ is always a character.
\end{remark}


\section{Calculations for Dihedral groups}

In this section we calculate the coefficients of $f_3$ and $t_3$ for the dihedral group: 
$D_{2n} = \langle a,b\colon a^n=b^2=1, a^b = a^{-1} \rangle.$
For convenience of the reader we include the character table(s) of $D_{2n}$. For more details we refer the
reader to~\cite{liebeck}.

When $n$ is odd, $D_{2n}$ has two linear characters $\chi_1,\chi_2$, and $(n-1)/2$ degree-two irreducible
characters $\psi_1,\ldots,\psi_{(n-1)/2}$. The conjugacy classes in this case are: $\{ 1\}$; $\{ a^r,a^{-r} \}$ for 
$1\leq r\leq (n-1)/2$; and $\{ a^s b : 0 \leq s\leq n-1 \}$. If we set $\omega = e^{2\pi i/n}$, we have the 
following table.

$$
\begin{array}{c|ccr}
  \rm class& 1 & a^r & b \cr
  \rm size&1&2& n\cr
\hline
  \chi_{1}&1&1&1 \cr
  \chi_{2}&1&1&-1\cr
  \psi_{j}&2& \omega^{jr} + \omega^{-jr}&0\cr
\end{array}
$$

When $n$ is even, $D_{2n}$ has four linear characters $\chi_1,\ldots,\chi_4$ and
$(n-2)/2$ degree two irreducible characters $\psi_1,\ldots,\psi_{(n-2)/2}$. In this case, if we write 
$n=2l$, then the conjucagy classes are as follows: $\{ 1\}$; $\{ a^l\}$; $\{ a^r, a^{-r}\}$ for $1\leq r\leq l-1$; 
$\{ a^r b\colon r \text{ even}\}$; and $\{ a^s b: s \text{ odd}\}$. We have the following table.

$$
\begin{array}{c|cccrr}
  \rm class& 1 & a^l & a^r & b & ab \cr
  \rm size&1&1& 2& l &  l\cr
\hline
  \chi_{1}&1&1&1 & 1 & 1 \cr
  \chi_{2}&1&1&1&-1&-1\cr
  \chi_{3}&1&(-1)^l&(-1)^r&1&-1\cr
  \chi_{4}&1&(-1)^l&(-1)^r&-1&1\cr
  \psi_{j}&2&2(-1)^j &  \omega^{jr} + \omega^{-jr}&0&0\cr
\end{array}
$$



\begin{lemma}\label{trig}
If $n$ is a positive integer, then
\[ \sum_{k=1}^{n-1} \cos \left( \frac{2k\pi}{n}\right) = -1; \]
and if $n$ is an odd integer greater than 1, then we also have 
\[ \sum_{k=1}^{\frac{n-1}{2}} 2 \cos\left( \frac{4k\pi}{n}\right) = -1. \]
\end{lemma}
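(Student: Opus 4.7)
The plan is to prove both identities by reducing them to the familiar fact that the sum of all $n$-th roots of unity vanishes. I would introduce $\omega = e^{2\pi i/n}$ and use that $2\cos(2k\pi/n) = \omega^k + \omega^{-k}$ throughout.

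For the first identity, I would start from $\sum_{k=0}^{n-1}\omega^k = 0$ (valid because $\omega \neq 1$ when $n > 1$, while the case $n=1$ gives an empty sum equal to $0 = -1+1$, which needs to be checked separately or the identity interpreted as vacuous). Taking real parts yields $\sum_{k=0}^{n-1}\cos(2k\pi/n) = 0$, and isolating the $k=0$ term (which equals $1$) gives the desired equality.

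For the second identity, I would rewrite $2\cos(4k\pi/n) = \omega^{2k} + \omega^{-2k} = \omega^{2k} + \omega^{n-2k}$. The key observation is that when $n$ is odd, the exponents $2k \pmod n$ for $k = 1, \ldots, (n-1)/2$ range exactly over the even residues $2,4,\ldots,n-1$, while the exponents $n-2k$ range over the odd residues $n-2, n-4, \ldots, 1$. Together these exhaust all nonzero residues mod $n$ exactly once, so
\[
\sum_{k=1}^{(n-1)/2} 2\cos\!\left(\frac{4k\pi}{n}\right) = \sum_{j=1}^{n-1}\omega^j,
\]
and applying (the complex version of) the first identity, or equivalently $\sum_{j=0}^{n-1}\omega^j = 0$, finishes the proof.

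There is no real obstacle here; the only point that requires a moment of care is verifying the bijection between $\{2k \bmod n, -2k \bmod n : 1 \le k \le (n-1)/2\}$ and the set of nonzero residues modulo $n$, which relies crucially on $n$ being odd (so that $2$ is invertible mod $n$ and no exponent collapses to $0$).
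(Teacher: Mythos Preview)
Your proof is correct and follows essentially the same route as the paper: both reduce the first identity to the vanishing of $\sum_{k=0}^{n-1}\omega^k$ for $\omega=e^{2\pi i/n}$, and both obtain the second identity by recognizing that the terms $2\cos(4k\pi/n)$ for $k=1,\dots,(n-1)/2$ rearrange to the full sum $\sum_{j=1}^{n-1}\cos(2j\pi/n)$ (the paper asserts this equality in one line, while you spell out the bijection on residues explicitly). Your cautionary remark about $n=1$ is well taken---the stated identity is actually false there (empty sum $=0$), an edge case the paper's proof also glosses over but which is irrelevant in the dihedral-group context where the lemma is applied.
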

\begin{proof}
Recall that $\omega = e^{2\pi i/n} = \cos{\frac{2\pi}{n}} + i\sin{\frac{2\pi}{n}}$. Then, $\omega^n - 1 = (\omega - 1)\sum_{k=0}^{n-1} \omega^k = 0$.
So it follows that
\[ \sum_{k=1}^{n-1} \cos\left( {\frac{2k\pi}{n}}\right) = -1.\]
If $n$ is a positive odd integer greater than 1, then 
\[
\sum_{k=1}^{\frac{n-1}{2}} 2 \cos \left(\frac{4k\pi}{n}\right) =  \sum_{k=1}^{n-1} \cos\left(\frac{2k\pi}{n}\right) = -1.
\]
\end{proof}

\begin{theorem}\label{coeffs_dihedral}
The coefficients of $f_3$ for $D_{2n}$ are given as follows: 
\begin{enumerate}
\item When $n \equiv 1,3 \mod 4$,
\[ \langle f_3,\chi_i \rangle = \frac{1}{2}(n^2+2n+5) \text{ and } \langle f_3,\psi_j \rangle = n^2+5. \]
\item When $n \equiv 0 \mod 4$,
\[ \langle f_3,\chi_i \rangle = \frac{1}{2}(n^2+4n+24) \text{ and } \langle f_3,\psi_j \rangle = \left\{ 
\begin{array}{l}
n^2+16 \text{ if $j$ is odd}\\
n^2+24 \text{ if $j$ is even.}
\end{array}
\right. \]
\item When $n \equiv 2 \mod 4$,
\[ \langle f_3,\chi_i \rangle = \frac{1}{2}(n^2+4n+20) \text{ and } \langle f_3,\psi_j \rangle = n^2+20. \]
\end{enumerate}
\end{theorem}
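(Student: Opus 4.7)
The plan is to apply Theorem~\ref{thm_one} directly: for each irreducible character $\chi$ of $D_{2n}$,
\[ \langle f_3,\chi\rangle = \frac{1}{2n}\sum_{a,b\in D_{2n}} |C_G(ab)b\cap C_G(a)|\,\chi([a,b]), \]
and evaluate the right-hand side by splitting the double sum into the four ``type'' regions according as $a$ and $b$ belong to the rotation subgroup $\langle a\rangle$ or to its nontrivial coset $\langle a\rangle b$. The key input is that every commutator lies in $\langle a^2\rangle$: a routine calculation using $a^b=a^{-1}$ and $(a^ib)^{-1}=a^ib$ gives
\[ [a^i,a^j]=1,\quad [a^i,a^jb]=a^{-2i},\quad [a^ib,a^j]=a^{2j},\quad [a^ib,a^jb]=a^{2(i-j)}, \]
so in each block of the double sum the character value $\chi([a,b])$ depends only on a single exponent of $a$.

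Next I would assemble a centralizer dictionary. For $n$ odd, $C_G(a^r)=\langle a\rangle$ when $r\neq 0$ and $C_G(a^rb)=\{1,a^rb\}$; for $n=2l$ even, the subgroup $\{1,a^l\}$ is central, so $C_G(a^r)=\langle a\rangle$ unless $r\in\{0,l\}$, while $C_G(a^rb)=\{1,a^l,a^rb,a^{r+l}b\}$. In each of the four blocks (rotation$\times$rotation, rotation$\times$reflection, reflection$\times$rotation, reflection$\times$reflection) this dictionary determines $|C_G(ab)b\cap C_G(a)|$ as a simple integer-valued function of the exponents $i,j$; typically the intersection size is $|C_G(a)|$ or a small constant, with exceptional enlargement precisely at the central elements when $n$ is even. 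After substituting the commutator formulas one is left with four sums of the form $\sum_{i,j}(\text{centralizer factor})\cdot\chi(a^{2k(i,j)})$.

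The evaluation then reduces to two kinds of inner sums over $i$: the sum of $\chi$ (which for linear $\chi_i$ is trivial, and for $\psi_j$ becomes $\sum_{r}(\omega^{jr}+\omega^{-jr})$), and its ``even-exponent'' variant $\sum_i(\omega^{2ij}+\omega^{-2ij})$. Lemma~\ref{trig} handles both: the first telescopes to $-2$ against the $\chi(1)=2$ term, and the second, when $n$ is odd, is exactly the identity $\sum_{k=1}^{(n-1)/2}2\cos(4k\pi/n)=-1$. For $n$ even the doubling map $i\mapsto 2i$ on $\mathbb{Z}/n$ has image of index $2$, which is the structural reason why the coefficients split according to the parity of $j$ in case $n\equiv 0\pmod 4$: precisely there $\psi_j(a^l)=2(-1)^j$ detects the central involution $a^l$ that enlarges the centralizers of reflections.

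The main obstacle is bookkeeping rather than cleverness. Keeping the three residues of $n$ modulo $4$ straight while tracking (i) which centralizers jump in size due to the center $\{1,a^l\}$, (ii) whether $r\mapsto 2r$ is a bijection on $\mathbb{Z}/n$, and (iii) the sign $(-1)^j$ attached to $\psi_j(a^l)$, is where errors tend to creep in; the odd case is cleanest, the $n\equiv 2\pmod 4$ case is mild, and $n\equiv 0\pmod 4$ is the one that forces the bifurcation in $\langle f_3,\psi_j\rangle$. Collecting all four block contributions, dividing by $|G|=2n$, and simplifying with Lemma~\ref{trig} yields the claimed integers in the three cases.
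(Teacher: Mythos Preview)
Your outline is correct and follows essentially the same route as the paper: both apply Theorem~\ref{thm_one} directly, compute the centralizers and commutators in $D_{2n}$ case by case, and finish with the trigonometric identities of Lemma~\ref{trig}. The only organizational difference is that the paper exploits the fact that $\theta_\chi$ is a class function to compute $\theta_\chi$ only at conjugacy class representatives (then weights by class size), rather than splitting the full double sum into your four rotation/reflection blocks; this is a minor bookkeeping shortcut, not a different idea.
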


\begin{proof}
Let $\chi$ be an irreducible character of $D_{2n}$. To simplify notation we will write only $C(y)$ to denote 
the centralizer of $y$ in $D_{2n}$. 
According to Theorem~\ref{thm_one} we have:
\[ \langle f_3,\chi \rangle = \frac{1}{|D_{2n}|}\sum_{x\in D_{2n}} \theta_\chi(x), \text{ where } 
\theta_\chi(x) = \sum_{y\in D_{2n}} |C(xy)y \cap C(x)|\chi([x,y]).  \]
We will make use of Lemma~\ref{trig}, and of the fact that $\theta_\chi$ is a class function
(see Theorem~\ref{thm_one}). 

Suppose $n \equiv 1,3 \mod 4$. A careful analysis shows that we have the following calculations:
\begin{align*} 
\theta_\chi(1) &=\sum_{y \in G} |C(y)|\chi(1)\\
&= [2n+(n-1)n+2n]\chi(1)\\
&=(n^2+3n)\chi(1).
\end{align*}
\begin{align*}
\theta_\chi(a^r)&=\sum_{y \in G} |C(a^ry)y \cap C(a^r)|\chi([a^r,y])\\
&=n^2\chi(1)+n\chi(a^{2r}).
\end{align*}
\begin{align*}
\theta_\chi(a^rb)&=\sum_{y \in G} |C(a^rby)y \cap C(a^rb)|\chi([a^rb,y])\\
&= 4\chi(1) + \sum_{i=1}^{n-1} \chi(a^i).
\end{align*}
Thus,
\begin{align*}
\langle f_3,\chi \rangle &= \frac{1}{2n} \left[ (n^2+3n)\chi(1) + \sum_{i=1}^{n-1} [n^2\chi(1) + n \chi(a^{2i})] + n[4\chi(1) + \sum_{i=1}^{n-1} \chi(a^i)] \right]\\
&= \frac{1}{2n} \left[ (n^3+7n)\chi(1) + \sum_{i=1}^{\frac{n-1}{2}} 2n \chi(a^{2i}) + n\sum_{i=1}^{n-1} \chi(a^i) \right] \\
&= \frac{1}{2} \left[ (n^2+7)\chi(1) + \sum_{i=1}^{\frac{n-1}{2}} 2 \chi(a^{2i}) + \sum_{i=1}^{n-1} \chi(a^i) \right].
\end{align*}

Hence, if $\chi= \chi_i$, then
\begin{align*}
\langle f_3,\chi_i \rangle &= \frac{1}{2} \left[ (n^2+7) + (n-1) + (n-1) \right]\\
&= \frac{1}{2}(n^2+2n+5),
\end{align*}
and if $\chi=\psi_i$, then
\begin{align*}
\langle f_3,\psi_i \rangle &= \frac{1}{2} \left[ 2(n^2+7) +\sum_{i=1}^{\frac{n-1}{2}} 4 \cos\left( \frac{4i\pi}{n}\right) 
+ \sum_{i=1}^{n-1} 2\cos\left( \frac{2i\pi}{n}\right) \right]\\
&=n^2+7-1-1\\
&= n^2 + 5.
\end{align*}

Likewise, when $n \equiv 0 \mod 4$, we have the following calculations:
\begin{align*}
\theta_\chi(1)&=\sum_{y \in G} |C(y)|\chi(1)\\
&= [2\cdot2n+(n-2)n+2n]\chi(1)\\
&=(n^2+6n)\chi(1).
\end{align*}
\begin{align*}
\theta_\chi(a^{\frac{n}{2}})&=\sum_{y \in G} |C(y)|\chi(1)\\
&=(n^2+6n)\chi(1).
\end{align*}
\begin{align*}
\text{When }r\neq \frac{n}{2}: \hspace{0.5cm} \theta_\chi(a^r)&=\sum_{y \in G} |C(a^ry)y \cap C(a^r)|\chi([a^r,y])\\
&=n^2\chi(1)+2n\chi(a^{-2r}).
\end{align*}
\begin{align*}
\text{When $r$ is even:}\hspace{0.5cm}\theta(a^rb) &=\sum_{y \in G} |C(a^rby)y \cap C(a^rb)|\chi([a^rb,y])\\
&=2\cdot4\chi(1) + 2\cdot4\chi(1) + 4\chi(a^{\frac{n}{2}}) + \sum_{i=1}^{\frac{n}{2}-1}4\chi(a^{2i})\\
&=16\chi(e) + 4\chi(a^{\frac{n}{2}}) + \sum_{i=1}^{\frac{n}{2}-1}4\chi(a^{2i}).
\end{align*}
\begin{align*}
\text{When $r$ is odd:}\hspace{0.5cm} \theta_\chi(a^rb) &=16\chi(1) + 4\chi(a^{\frac{n}{2}}) 
+ \sum_{i=1}^{\frac{n}{2}-1}4\chi(a^{2i}).
\end{align*}

Thus, 
\begin{align*}
\langle f_3,\chi \rangle &= \frac{1}{2n} \left[ 2(n^2+6n)\chi(1) + (n-2)n^2\chi(e) + \sum_{i=1}^{\frac{n}{2}-1} 4n \chi(a^{2i}) \right. \\
&+ \left. 2\cdot \frac{n}{2}(16\chi(1) + 4\chi(a^{\frac{n}{2}}) + \sum_{i=1}^{\frac{n}{2}-1}4\chi(a^{2i})) \right] \\
&= \frac{1}{2} \left[ (n^2+28)\chi(1) + 4\chi(a^{\frac{n}{2}}) +  \sum_{i=1}^{\frac{n}{2}-1}8\chi(a^{2i}) \right].
\end{align*}

Hence, when $\chi=\chi_i$, we have
\begin{align*}
\langle f_3,\chi_i \rangle &= \frac{1}{2} \left[ (n^2+28) + 4 +  \sum_{i=1}^{\frac{n}{2}-1}8 \right]\\
&= \frac{1}{2}(n^2+4n+24),
\end{align*}
and when $\chi=\psi_j$ we get
\begin{align*}
\langle f_3,\psi_j \rangle &= \frac{1}{2} \left[ 2(n^2+28) + 8\cos(j\pi) + 
\sum_{i=1}^{\frac{n}{2}-1} 16 \cos\left( \frac{2i\pi j}{n}\right) \right]\\
&= \left\{
\begin{array}{l}
n^2+16 \text{ if $j$ is odd}\\
n^2+24 \text{ if $j$ is even.}
\end{array}
\right.
\end{align*}

Finally, when $n \equiv 2 \mod 4$, a similar analysis yields:
\begin{align*}
\langle f_3,\chi \rangle &= \frac{1}{2n} \left[ 2(n^2+6n)\chi(e) + (n-2)n^2\chi(1) 
+ 2\cdot \frac{n}{2}(16\chi(1) + 4\chi(a^{\frac{n}{2}}) + \sum_{i=1}^{\frac{n}{2}-1}4\chi(a^{2i})) \right] \\
&= \frac{1}{2} \left[ (n^2+28)\chi(1) + \sum_{i=1}^{\frac{n}{2}-1}8\chi(a^{2i}) \right].
\end{align*}
Thus, if $\chi=\chi_i$, then
\[
\langle f_3,\chi \rangle = \frac{1}{2}(n^2+4n+20).
\]
And if $\chi=\psi_j$, then
\[
\langle f_3,\chi \rangle = n^2+20.
\]
\end{proof}

\begin{theorem}\label{f3rotations} 
Suppose that $n\geq 3$. If $g$ is in the commutator subgroup of $D_{2n}$, then $f_3(g)>0$.
More precisely, we have the following:
\begin{enumerate}
\item \[
f_3(1) = \left\{
\begin{array}{c c}
n^3+7n & \text{ if $n$ is odd,}\\
n^3 + 28n & \text{ if $n$ is even.}
\end{array}
\right.
\]
\item
For any integer $s$ with $0 < s < \frac{n}{2}$,
\[
f_3(a^{2s}) =
\left\{
\begin{array}{cc}
2n &\text{ if $n$ is odd,}\\
8n &\text{ if $n$ is even and $s \neq \frac{n}{4}$,}\\
12n &\text{ if $n$ is even and $s = \frac{n}{4}$.}
\end{array}
\right.
\]
\end{enumerate}

\end{theorem}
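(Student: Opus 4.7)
The plan is to apply the inverse-transform formula
\[
f_3(g) \;=\; \sum_{\chi\in\mathrm{Irr}(D_{2n})} \langle f_3,\chi\rangle\,\chi(g)
\]
with the coefficients $\langle f_3,\chi\rangle$ already computed in Theorem~\ref{coeffs_dihedral} and the character values read off the dihedral character tables. The positivity claim that $f_3(g)>0$ for $g\in[D_{2n},D_{2n}]=\langle a^2\rangle$ is immediate once we have the explicit values in (1) and (2), so the real work is establishing these formulas.

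First I would dispatch the case $g=1$ in each of the three parity regimes. Here $\chi(1)$ is just the degree, so $f_3(1)$ becomes a sum of the known coefficients weighted by $1$ or $2$. For $n\equiv 1,3\pmod 4$ this is $2\cdot\tfrac12(n^2+2n+5)+\tfrac{n-1}{2}\cdot 2(n^2+5)=n^3+7n$, and for even $n$ one checks (splitting according to $n\equiv 0$ or $2\pmod 4$ and counting odd/even indices $j$ in the range $1\le j\le(n-2)/2$) that both cases collapse to $n^3+28n$. Next I would handle $g=a^{2s}$ for $0<s<n/2$. Each linear character $\chi_i$ evaluates to $1$ on $a^{2s}$ (since $2s$ is even, $\chi_3,\chi_4$ cause no trouble), while $\psi_j(a^{2s})=2\cos(4\pi js/n)$. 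The sum over $j$ is therefore a sum of cosines which, via Lemma~\ref{trig} and the standard pairing $j\leftrightarrow n-j$, reduces to either $-1/2$ (odd $n$) or $-1$ (even $n$, $s\neq n/4$). Substituting yields $2n$ and $8n$ respectively, matching the stated values.

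The main obstacle, and the one requiring the most care, is the case $n\equiv 0\pmod 4$ with $s=n/4$, so that $a^{2s}=a^{n/2}$ is the central rotation. Two complications conspire: the coefficient $\langle f_3,\psi_j\rangle$ depends on the parity of $j$, and simultaneously $\psi_j(a^{n/2})=\omega^{jn/2}+\omega^{-jn/2}=2(-1)^j$ also depends on the parity of $j$. To disentangle this I would introduce
\[
A=\sum_{j=1}^{n/2-1}\cos(2\pi j m/n), \qquad B=\sum_{j=1}^{n/2-1}(-1)^j\cos(2\pi j m/n),
\]
with $m=2s$, so that the sums over odd and even $j$ become $(A-B)/2$ and $(A+B)/2$. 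The quantity $A$ is pinned down by Lemma~\ref{trig}, while $B$ is handled by the product-to-sum identity $(-1)^j\cos(2\pi jm/n)=\tfrac12[\cos(2\pi j(n/2+m)/n)+\cos(2\pi j(n/2-m)/n)]$ and a second application of Lemma~\ref{trig} to each piece. For $s=n/4$, where $m=n/2$ and the Lemma~\ref{trig} hypothesis $n\nmid m$ fails for the shifted arguments, the resulting $12n$ emerges directly from the $(-1)^j$-weighted sum of the coefficients; for $s\neq n/4$ the same bookkeeping yields $8n$. With all three cases verified, each $f_3(g)$ for $g$ in the commutator subgroup is one of $n^3+7n,\ n^3+28n,\ 2n,\ 8n,\ 12n$, all strictly positive for $n\geq 3$, which gives the opening assertion.
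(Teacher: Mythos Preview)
Your approach is correct and is essentially identical to the paper's: both apply the expansion $f_3(g)=\sum_\chi\langle f_3,\chi\rangle\,\chi(g)$ using the coefficients from Theorem~\ref{coeffs_dihedral} and the dihedral character tables, and both reduce the resulting cosine sums via Lemma~\ref{trig} (extended to $\sum_j\cos(4\pi js/n)$ by the pairing $j\leftrightarrow n-j$). The paper only writes out the odd-$n$ case explicitly and leaves the even-$n$ cases to the reader, whereas you sketch all three parity regimes, including the delicate $n\equiv 0\pmod 4$, $s=n/4$ case; but this is additional detail, not a different method.
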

\begin{proof}
Recall that $[D_{2n},D_{2n}] = \langle a^2 \rangle$. 
We will go over the case when $n$ is odd and will leave 
the cases $n \equiv 0,2 \mod 4$ to the reader. 

When $n$ is odd it suffices to calculate $f_3(a^{2s})$ where $0\leq 2s\leq (n-1)/2$. 
According to Theorem~\ref{coeffs_dihedral}, when $n\equiv 1,3 \mod 4$ we have:
\[ f_3(a^{2s}) = \frac{1}{2}(n^2 + 2n + 5)(1+1) + (n^2+5)\sum_{k=1}^{\frac{n-1}{2}} 2\cos\left( \frac{4\pi s k}{n}  \right) 
= 2n,
\]
and 
\[ f_3(1) = \frac{1}{2}(n^2 + 2n + 5)(1+1) + (n^2+5)\left( \frac{n-1}{2}\right) 2 = n^3 + 7.
\]
\end{proof}

Recall that the coefficients of $t_3$ are given by 
\[ \langle t_3, \chi \rangle = |G| \sum_{x \in K} \frac{|\chi(x)|^2}{\chi(1)}, \]
where $K$ is a system of representatives of the conjugacy classes of $G$, and $\chi \in \mathrm{Irr}(G)$. Using this formula we easily obtain the following result, whose proof 
we omit.

\begin{theorem}
The coefficients of $t_3$ for the dihedral group $D_{2n}$ are given as follows: 
\begin{enumerate}
\item When $n$ is odd,
\[ \langle t_3,\chi_i \rangle = n^2+3n,\text{ and  } \langle t_3,\psi_i \rangle = n^2+2n,\]
\item and when $n$ is even,
\[ \langle t_3,\chi_i \rangle = n^2+6n,\text{ and  }  \langle t_3,\psi_i \rangle = n^2+4n.\]
\end{enumerate}
\end{theorem}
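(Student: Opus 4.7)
The plan is to apply Theorem~\ref{thm_two} directly to the character table of $D_{2n}$ recalled in this section. Since $|D_{2n}|=2n$, the formula reduces to
\[
\langle t_3,\chi\rangle = \frac{2n}{\chi(1)}\sum_{x\in K}|\chi(x)|^2,
\]
so all that is required is to evaluate $\sum_{x\in K}|\chi(x)|^2$ for each irreducible character $\chi$, using one conjugacy-class representative with the appropriate multiplicity one.

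For the linear characters the computation is immediate: $|\chi_i(x)|^2=1$ for every $x$, so the sum equals the number of conjugacy classes of $D_{2n}$. This is $(n+3)/2$ when $n$ is odd and $n/2+3$ when $n$ is even, which yields $\langle t_3,\chi_i\rangle = n^2+3n$ and $n^2+6n$ respectively, matching the stated values. The slightly more involved step is the computation for the two-dimensional characters $\psi_j$. Here $\psi_j(1)=2$, $\psi_j(a^r)=2\cos(2\pi jr/n)$, and $\psi_j$ vanishes on the reflections (and, in the even case, $\psi_j(a^l)=2(-1)^j$). Using $\cos^2\theta = \tfrac{1}{2}(1+\cos 2\theta)$, the sum $\sum_{x\in K}|\psi_j(x)|^2$ reduces to a constant plus a sum of cosines of the form $\sum_r 2\cos(4\pi jr/n)$, which is exactly the shape handled by Lemma~\ref{trig}.

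When $n$ is odd, the relevant reduction gives $\sum_{r=1}^{(n-1)/2}2\cos(4\pi jr/n)=-1$ by Lemma~\ref{trig} (since $\gcd(j,n)$ considerations make the sum a complete-minus-trivial-term sum of $n$-th roots of unity). Plugging in yields $\sum_{x\in K}|\psi_j(x)|^2 = n+2$, and multiplying by $2n/2$ produces $n^2+2n$. When $n=2l$ is even, the extra class representative $a^l$ contributes $|\psi_j(a^l)|^2=4$, and the cosine sum over $r=1,\dots,l-1$ now involves $\cos(2\pi jr/l)$. Since $1\leq j\leq l-1$ means $l\nmid j$, the standard root-of-unity identity gives $\sum_{r=1}^{l-1}\cos(2\pi jr/l)=-1$; combining these pieces produces $\sum_{x\in K}|\psi_j(x)|^2 = n+4$ independent of the parity of $j$, whence $\langle t_3,\psi_j\rangle = n^2+4n$.

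The only genuine bookkeeping obstacle is to track which conjugacy classes exist in each parity case and to confirm that the character sums of cosines collapse to $-1$ across all values of $j$ in the admissible range (in particular, in the even case, the $(-1)^j$ contribution from $a^l$ is squared and so the coefficient is independent of $j$, unlike for $f_3$). Once the character-table lookup and the two applications of Lemma~\ref{trig} are in place, the result is immediate; this is exactly why the proof is omitted in the text.
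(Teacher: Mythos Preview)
Your proposal is correct and follows exactly the approach the paper intends: apply the formula $\langle t_3,\chi\rangle = \frac{|G|}{\chi(1)}\sum_{x\in K}|\chi(x)|^2$ from Theorem~\ref{thm_two} to the explicit character table of $D_{2n}$, using the double-angle identity together with the root-of-unity sums of Lemma~\ref{trig} (in the mild generalization $\sum_r 2\cos(4\pi jr/n)=-1$ for all admissible $j$). This is precisely the ``easy'' computation the paper alludes to when it omits the proof.
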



\section{Probability Distributions} 

Calculating the exact value of $f_3(g)$, or equivalently that of $P_3(g)$, could be a pretty challenging 
task even in the case when $g=1$.
Nevertheless, it is possible to obtain some estimates as we will show in this section. 
Some results estimating $P_2(1)$ and $P_2(g)$ can be found for instance 
in~\cite{gurnalick_robinson} and~\cite{ps}.

A key observation that has been used to estimate $P_n(1)$
is to write $f_n(1)$ recursively as follows:
\[ f_n(1) = 
|\{ (x_1,\ldots,x_n)\in (G\setminus Z(G))\times G^{n-1}\colon [x_i,x_j]=1 \text{ for } i<j \} |
+
|Z(G)|f_{n-1}(1).
\]
A similar recursive formula can be obtained if we consider the function $f_n$ restricted to
tuples formed with elements in the centralizers of elements of the group $G$. More precisely,
\[
f_n(1) = \sum_{g \in G} f_{n-1,C_G(g)}(1).  
\]

\begin{example}\normalfont
Recall that a group $G$ is called TC (for transitively commutative) if 
commutativity is a transitive relation on the set of non-central elements
of $G$. This latter condition is equivalent to requiring non-central elements
to have abelian centralizers. For TC groups (also known as CA-groups) both recursive formulas 
simplify to:
\[ f_n(1) = |G| \sum_{x_i\notin Z(G)} |C_{G}(x_i)|^{n-2} + |Z(G)|f_{n-1}(1). 
\]
where $x_1,\ldots,x_k$ is a full set of representatives of the conjugacy classes of 
$G$. 
\end{example}

It is also possible to approach the calculation of $P_n(1)$ by considering the 
poset of abelian subgroups of $G$ as was shown in~\cite{etg}. The structure of this poset 
simplifies when $G$ is a TC group. For instance, we showed in~\cite{etg} that for the
alternating group $A_5$ (which is a TC group) we have:
\[
P_n(1)= \frac{6}{12^n} + \frac{5}{15^n} + \frac{10}{20^n} - \frac{20}{60^n}.
\]

It is well-known that $P_2(1)\leq 5/8$ for any non-abelian group $G$ (see for instance~\cite{gustafson}), 
and that this upper bound is attained by groups that satisfy $G/Z(G)\cong \mathbb{Z}_2\times \mathbb{Z}_2$.
This upper bound was extended in~\cite{lescot} to: 
\[
P_n(1) \leq \frac{3\cdot 2^{n-1}-1}{2^{2n-1}}.\]
This inequality can be slightly improved if we take into account the index of the center of $G$.
We will show this in the following result only when $n=3$, although it is not hard to extend it to higher values 
of $n$.

\begin{proposition}
Suppose that $G$ is a finite non-abelian group. Then for all $g\in G$, we have:
\begin{enumerate}
\item $P_3(1) \leq \frac{1}{2}(P_2(1) - \alpha) + \alpha P_2(1) \leq \frac{11}{32}$, where $\alpha^{-1} = |G:Z(G)|$.
\item $\frac{1}{|G||G'|} \leq P_3(g) \leq \frac{P_2(1)}{|G|}\sum_{\chi \in \mathrm{Irr}(G)} \chi(1)|\chi(g)| \leq P_2(1)$.
\item $P_3(g) \leq P_2(1)\sqrt{\frac{|C_G(g)|}{|G|}}$.
\end{enumerate}
\end{proposition}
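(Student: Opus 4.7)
I plan to handle the three parts by rather different methods. For part (1), I would begin from the recursive formula
\[
f_3(1) = |Z(G)|\,f_2(1) + \sum_{x\notin Z(G)} f_{2,C_G(x)\leq G}(1)
\]
stated at the opening of this section. Because non-central elements have proper centralizers, $|C_G(x)| \leq |G|/2$ for $x \notin Z(G)$, and hence $f_{2,C_G(x)}(1) \leq |C_G(x)|^2 \leq (|G|/2)|C_G(x)|$. Summing and using the identity $\sum_{x\in G}|C_G(x)| = f_2(1)$ yields
\[
f_3(1) \leq |Z(G)|\,f_2(1) + \frac{|G|}{2}\bigl(f_2(1) - |Z(G)|\cdot|G|\bigr),
\]
which upon dividing by $|G|^3$ is exactly the first asserted inequality. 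To extract the numerical constant $11/32$, I would rewrite the bound as $\tfrac12 P_2(1) + \alpha\bigl(P_2(1)-\tfrac12\bigr)$ and plug in the classical estimates $P_2(1) \leq 5/8$ and $\alpha \leq 1/4$ (the latter valid because $G/Z(G)$ cannot be nontrivially cyclic when $G$ is non-abelian); a brief case analysis on the sign of $P_2(1)-1/2$ pins the maximum value down to $11/32$.

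For the upper bounds in (2) and (3) I would start from $f_3(g) \leq t_3(g)$ (Proposition~\ref{properties}(6)) and expand $t_3 = \sum_\chi a_\chi \chi$ via Theorem~\ref{thm_two}, where $a_\chi = (|G|/\chi(1))\sum_{x\in K}|\chi(x)|^2 \geq 0$. The crude bound $|\chi(x)|\leq \chi(1)$ gives $a_\chi \leq |G|\,k(G)\,\chi(1)$, hence by the triangle inequality
\[
P_3(g) \leq \frac{1}{|G|^3}\sum_\chi a_\chi|\chi(g)| \leq \frac{k(G)}{|G|^2}\sum_\chi \chi(1)|\chi(g)| = \frac{P_2(1)}{|G|}\sum_\chi \chi(1)|\chi(g)|.
\]
Cauchy--Schwarz then gives $\sum_\chi \chi(1)|\chi(g)| \leq \sqrt{\sum_\chi \chi(1)^2}\sqrt{\sum_\chi|\chi(g)|^2} = \sqrt{|G|\,|C_G(g)|}$, which yields part (3) and, via $|C_G(g)| \leq |G|$, the final inequality $\leq P_2(1)$ in part (2).

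The lower bound $P_3(g) \geq 1/(|G||G'|)$ is where I anticipate the main difficulty. When $g=1$ it follows cleanly from the Section~1 inequality $P_3(1) \geq P_2(1)/|G|$ combined with $P_2(1) = k(G)/|G| \geq 1/|G'|$; the latter holds because the surjection $G \to G/G'$ sends each conjugacy class of $G$ to a single element of the abelian quotient, so $k(G) \geq |G:G'|$. For the general case (where meaningfully $g \in G'$) I would combine the $Z(G)^3$-invariance of $T_g$ under component-wise translation $(x,y,z) \mapsto (xz_1,yz_2,zz_3)$ (which alone gives $|T_g| \geq |Z(G)|^3$ whenever $T_g$ is nonempty) with an averaging argument over the $|G'|$ possible commutator values, using the identity $\sum_{g\in G'} f_3(g) = m_{\chi_1}$ extracted from Theorem~\ref{thm_one}. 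Upgrading such an averaged bound to a pointwise lower bound valid for every $g \in G'$ is the step I expect to be most delicate.
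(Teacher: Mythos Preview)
Your arguments for part~(1), the upper bounds in~(2), and part~(3) are all correct. Part~(1) is essentially the paper's proof: the paper groups the non-central elements by conjugacy class and bounds the local commuting probability $f_{2,C_G(x_i)}(1)/|C_G(x_i)|^2$ by~$1$ together with $|C_G(x_i)|/|G|\le 1/2$, whereas you sum directly over elements using $f_{2,C_G(x)}(1)\le (|G|/2)|C_G(x)|$; these are the same estimate. For the upper bound in~(2) the paper takes a different route, working with the expansion $f_3=\frac{1}{|G|}\sum_\chi m_\chi\chi$ from Theorem~\ref{thm_one} and bounding $|m_\chi|/|G|^3\le P_2(1)\chi(1)$ via the crude estimate $|\theta_\chi(x)|\le |C_G(x)|\,|G|\,\chi(1)$. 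Your path through $t_3$ and Theorem~\ref{thm_two} is equally valid and lands on the same intermediate inequality. Your Cauchy--Schwarz step for~(3), namely $\sum_\chi \chi(1)|\chi(g)|\le |G|^{1/2}|C_G(g)|^{1/2}$, is in fact cleaner than what the paper writes (the paper bounds each $\langle t_3,\chi\rangle$ individually before applying Cauchy--Schwarz to $\sum_\chi|\chi(g)|$).

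On the lower bound in~(2): stop worrying. The paper's own proof consists of exactly the two lines you already have for $g=1$, namely $P_3(1)\ge P_2(1)/|G|$ from Section~1 and $P_2(1)\ge 1/|G'|$ from~\cite{gurnalick_robinson}; nothing more is claimed or proved for general~$g$. Your instinct that an averaging argument cannot be upgraded to a pointwise bound is sound---indeed $P_3(g)=0$ whenever $g\notin G'$, so the inequality as literally stated for all $g\in G$ cannot hold, and even for $g\in G'$ positivity of $f_3(g)$ is precisely the content of results like Theorem~\ref{thm_three} and the Ore-type conjectures in Remark~\ref{ore_extending}, not a generality. Read the lower bound as pertaining to $g=1$ only.
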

\begin{proof}
\begin{enumerate}
\item 
If we write the class equation of $G$ as $|G| = |Z(G)| + s_1 + \cdots + s_m$, then each $s_i$ is at least two
and so $m \leq (|G| - |Z(G)|)/2$. Since $G$ is not abelian it follows that $\alpha \leq 1/4$, and hence
\[ P_2(1) = \frac{k(G)}{|G|} = \frac{|Z(G)| + m }{|G|} \leq \frac{1+\alpha}{2}. \]
Now we apply this idea again:
\begin{align*} 
P_3(1) & = \sum_{ i=i }^{m} \frac{|G|}{|C_G(x_i)|} \frac{f_{2,C_G(x_i)}(1)}{|G|^3} + \frac{|Z(G)|}{|G|}P_2(1) \\
 & = \sum_{ i=i }^{m} \frac{|G||C_G(x_i)|}{|G|^3} \frac{f_{2,C_G(x_i)}(1)}{|C_G(x_i)|^2} + \alpha P_2(1) \\
 & \leq (k(G)- |Z(G)|)\frac{1}{|G|}\cdot \frac{1}{2}\cdot 1 + \alpha P_2(1) \\
 & = \frac{1}{2}(P_2(1) - \alpha) + \alpha P_2(1) \leq \frac{1}{2}\left( \frac{1 -\alpha}{2} \right) + 
 \alpha \left( \frac{1+\alpha}{2}  \right) \\
 & = \frac{2\alpha^2 + \alpha + 1 }{4} \leq \frac{11}{32}.  
\end{align*}
\item Note that
\[
\frac{|\theta_\chi(x)|}{|C_G(x)|} \leq \sum_{y \in G} \chi(1) = |G|\chi(1).
\]
Thus
\[
| m_\chi | \leq \sum_{x \in G} |\theta_\chi(x) | = 
 \sum_{x^G} \frac{|G|}{|C_G(x)|} |\theta_\chi(x)|  \leq \sum_{x^G} |G||G|\chi(1). 
\]
and so 
\[ 
 \frac{| m_\chi |}{|G|^3} \leq P_2(1)\chi(1).
\]
It follows that 
\[ P_3(g) \leq \frac{1}{|G|}\sum_{\chi \in \mathrm{Irr}(G)} \frac{|m_\chi|}{|G|^3}\chi(g) \leq
\frac{1}{|G|}P_2(1)\sum_{\chi \in \mathrm{Irr}(G)} \chi(1)|\chi(g)| \leq P_2(1).\] 
The lower bound follows from $P_2(1)/|G| \leq P_3(1)$ and $1/|G'| \leq P_2(1)$ according to~\cite{gurnalick_robinson}.
\item Suppose that $K$ is a full system of representatives of the conjugacy classes of $G$. Then  by Theorem~\ref{thm_two} 
and applying the Cauchy-Schwarz inequality we have:
\[ \langle t_3, \chi \rangle = |G| \sum_{x \in K} \frac{|\chi(x)|^2}{\chi(1)} \leq |G| \sum_{x \in K} \chi(1)
\leq |G| k(G)^{1/2} |G|^{1/2}. \]
Thus, 
\begin{align*}
t_3(g) &= \sum_{\chi \in \mathrm{Irr}(G) } \langle t_3, \chi \rangle \chi(g) \\
& \leq |G|^{3/2}k(G)^{1/2} \sum_{\chi \in \mathrm{Irr}(G)} |\chi(g)| \\
& \leq |G|^{3/2}k(G)^{1/2}k(G)^{1/2} |C_G(g)|^{1/2}, 
\end{align*}
where the last inequality is an application of the Cauchy-Schwarz inequality.
The desired inequality follows from the inequality $f_3(g) \leq t_3(g)$ (see Proposition~\ref{properties}).  
\end{enumerate}
\end{proof}

Unlike the values of $P_2(g)$, the set of probability values $\{ P_3(g) \colon g\in G\}$ does not constitute a probability distribution on $G$.
Nevertheless, we can normalize $f_3$ to define a probability distribution on a group $G$ by setting:
\[
Q_3(g) = \frac{f_3(g)}{\sum_{x \in G} f_3(x)}
\]
Using the coefficients of $f_3$ we can also write $Q_3$ in terms of $\mathrm{Irr}(G)$. For instance,
for the alternating group $A_5$ we have (see Example~\ref{coeff_a5}):
\[ Q_3(g) =
\frac{1}{60}\left( \chi_1 + \frac{8}{5}\chi_2 + \frac{8}{5}\chi_3 + \frac{21}{10}\chi_4 + 
\frac{14}{5}\chi_5  \right). \]

It has been shown in ~\cite{shalev} that the distribution $P_2(g)= \frac{f_2(g)}{|G|^2}$ converges in the $L_1$-norm
to the uniform distribution $U(g)=\frac{1}{|G|}$ for finite non-abelian simple groups as $|G|\to \infty$.
Several computations seem to indicate that this latter is not the case for the 
distribution $Q_3$. For instance, we can see in the chart below that the distribution of $Q_3$ on
$A_5$ is heavily skewed at the identity (all the percentages are approximations).

\[
\begin{array}{c|rrrrr}
  \rm class&1&(12)(34)&(123)&(12345)&(12354)\cr
  \rm size&1&15&20&12&12\cr
\hline
  f_2 &300&32&63&65&65\cr
  P_2 &8.3\% & 0.8\% & 1.75\% & 1.8\% & 1.8\% \cr
  f_3 &1320&24&12&20&20\cr
  P_3 &0.6\% & 0.01\% &0.005\% &0.009\% &0.009\% \cr
  Q_3 &55\% &1\% &0.5\% &0.8\% &0.8\%	 
\end{array}
\]

We will close this section showing that the distribution $Q_3$ over the symmetric group 
$\Sigma_n$ is always postive for even permutations when $n\geq 3$. 
The following Lemma is straightforward and we will omit its proof.

\begin{lemma}\label{commutators_and_products} 
Suppose that $(x_1,x_2,x_3)$ and $(y_1,y_2,y_3)$ are 3-tuples of elements from $G$ that satisfy 
$[x_i,x_j]=g$ and $[y_i,y_j]=h$ for $i<j$.
If $[x_i,y_j]=1$ for all $i$ and $j$, then the 3-tuple $(x_1 y_1,x_2 y_2, x_3 y_3)$ satisfies 
$[x_i y_i, x_j y_j] = gh$ for $i<j$.
\end{lemma}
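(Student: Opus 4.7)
The plan is to verify the identity by a direct commutator manipulation, using only the hypothesis that every $x_i$ commutes with every $y_j$. First I would observe that this hypothesis promotes to something much stronger: if $X = \langle x_1, x_2, x_3 \rangle$ and $Y = \langle y_1, y_2, y_3 \rangle$, then every element of $X$ commutes with every element of $Y$, because centralizers are subgroups and $Y \subseteq C_G(x_i)$ for each $i$ already forces $X \subseteq C_G(y_j)$ for each $j$. This gives me the freedom to slide $x$'s past $y$'s anywhere in a product word.

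Next I would expand
\[
[x_i y_i, x_j y_j] = (x_i y_i)^{-1}(x_j y_j)^{-1}(x_i y_i)(x_j y_j) = x_i^{-1} y_i^{-1} x_j^{-1} y_j^{-1} x_i y_i x_j y_j,
\]
using $(x_i y_i)^{-1} = x_i^{-1} y_i^{-1}$ (which is itself an instance of the commutation hypothesis). Then I would group all the $x$'s to the left and all the $y$'s to the right, obtaining
\[
x_i^{-1} x_j^{-1} x_i x_j \cdot y_i^{-1} y_j^{-1} y_i y_j = [x_i, x_j]\,[y_i, y_j] = g h.
\]
Finally, since $g = [x_i,x_j] \in X$ and $h = [y_i, y_j] \in Y$, the first paragraph's observation also gives $gh = hg$, so the product is unambiguous.

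There is really no obstacle here; the only subtlety worth flagging is the check that pairwise commutation of generators extends to the full subgroups $X$ and $Y$, which is what licenses both the inverse formula $(x_i y_i)^{-1} = x_i^{-1} y_i^{-1}$ and the free rearrangement of the eight-letter word above. This is precisely why the statement is flagged as straightforward and its proof omitted in the text.
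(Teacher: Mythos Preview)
Your proof is correct; the paper omits the proof entirely, calling the lemma straightforward. Your observation that the pairwise commutation of generators lifts to elementwise commutation of $X=\langle x_1,x_2,x_3\rangle$ and $Y=\langle y_1,y_2,y_3\rangle$ is a clean way to justify the rearrangement, though strictly speaking you only ever slide individual $x_i^{\pm 1}$ past individual $y_j^{\pm 1}$ in the eight-letter word, so the hypothesis $[x_i,y_j]=1$ for all $i,j$ (and hence $[x_i^{-1},y_j^{-1}]=1$, etc.) already suffices without invoking the full subgroup statement.
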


\noindent {\bf Proof of Theorem~\ref{thm_three}:}
We want to prove that if $n\geq 3$ and $g\in A_n$, then $f_{3,\Sigma_n}(g) > 0$. 
We will proceed by induction on $n$. Using Theorem~\ref{thm_two} one can check that for $\Sigma_3$ and $\Sigma_4$ 
we have the following:
\begin{enumerate}
\item $f_{3,\Sigma_3}(1)=48$,
\item $f_{3,\Sigma_3}((123))=6$,
\item $f_{3,\Sigma_4}((12)(34))=72$, and
\item $f_{3,\Sigma_4}((123))=12$.
\end{enumerate}
Now assume the result is true for integers greater than 4 and less than $n$. 
We write $g$ as a product of disjoint cycles: $g=\sigma_1\cdots \sigma_u \tau_1\cdots\tau_v$, so that each 
$\sigma_i$ is a $r_i$-cycle of even length, and each $\tau_i$ is a $t_i$-cycle of odd length (including 1-cycles). 
Thus $n=r_1+\cdots+ r_u+t_1+\cdots+t_v$,
and $u$ must be an even number equal to 0, or greater than or equal to 4. 
By relabeling if necessary, we can assume that $g\in A_r \times A_t$, 
where $r=r_1+\cdots + r_u$ and $t=t_1+\cdots +t_v$. 

Then we have to consider two cases:
\begin{enumerate}
\item If each of the cycles $\sigma_i$ and $\tau_j$ has length less than $n$, then both $u$ and $v$ are less than $n$ 
and $u$ is equal to 0, or greater than or equal to 4. Then by inductive hypothesis there exist triples 
$(x_1,x_2,x_3)$ in $\Sigma_u$ and $(y_1,y_2,y_3)$ in $\Sigma_v$ such that 
$[x_i,x_j] = \sigma_1\cdots \sigma_u$ and $[y_i,y_j] = \tau_1\cdots\tau_v$ for all $i<j$. 
Hence by Lemma~\ref{commutators_and_products} it follows that $g=[x_iy_i,x_jy_j]$ for all $i<j$, as wanted.

\item If one of the cycles of $g$ has length equal to $n$, then this implies that $g$ must be an 
$n$-cycle. Since $n$-cycles are conjugate in $\Sigma_n$ it suffices to prove that $f_{3,\Sigma_n}((12\cdots n))>0$. 
According to Theorem~\ref{f3rotations} and Proposition~\ref{properties} we have: 
\[ 
f_{3,\Sigma_n}((12\cdots n))\geq f_{3,D_{2n}}((12\cdots n))>0,\] as wanted.
\end{enumerate}
\hfill $\square$

\begin{remark}\label{ore_extending}\normalfont
We have conjectured that $f_{3,A_n}(g)>0$ for all $g$ in $A_n$ when $n\geq 5$. This conjecture is in a way an extension
of Ore's conjecture. More precisely, if we set 
\[
\mathcal{O}_{k,G} = \{ g\in G \colon f_k(g) >0\},
\]
then Ore's conjecture states 
that $\mathcal{O}_{2,G} = G$ for any non-abelian simple group $G$. With this notation, Theorem~\ref{thm_three} states
that $\mathcal{O}_{3,\Sigma_n} = A_n$ for $n\geq 3$. We conjecture that 
$\mathcal{O}_{3,A_n} = A_n$ for $n\geq 5$.

On the other hand we know that $f_{3,A_5}((123))=12$, so from Proposition~\ref{properties} and the fact that 3-cycles
are conjugate in $A_n$ when $n\geq 5$ it follows that 
$f_{3,A_n}(g)>0$ whenever $g$ is the identity or a 3-cycle. Moreover, since any permutation in $A_n$ can be 
written as a product of at most 
$n/2$ 3-cycles, it follows that the support of the $k$-iterated convolution product $Q_3^{*k}$ is equal to $A_n$ 
for $k\geq n/2$. Therefore,
the random walk driven by $Q_3$ is ergodic and $Q_3^{*k}$ converges in the $L_1$-norm to the uniform distribution. 
This latter in turn implies that if we fix both $\epsilon>0$ and $n\geq 5$, then we can find $k$ large enough (see Corollary 1.2 of~\cite{shalev}) so 
that 
\[
|A_n|\geq |\mathcal{O}_{A_n}^{*k}| \geq (1-\epsilon)|A_n|,     
\]
where $\mathcal{O}_{A_n}^{*k} = \{ g\in A_n \colon Q_3^{*k}(g)>0 \}$. This is remarkable to some extent,
because it is telling that almost every element in $A_n$ can be written as a product of $k$ factors
each of which is in $\mathcal{O}_{3,A_n}$, for some $k$ large enough. 

As for $\mathcal{O}_{k,G}$ when $k>3$ we have computational evidence to conjecture that 
$\mathcal{O}_{k,\Sigma_n} = \{ 1 \} $ when $k>3, n>2$. Note that it suffices to show the case when $k=4$ 
(since $f_{n}(g) \neq 0$ implies $f_{n-1}(g) \neq 0$).

\end{remark}

\end{document}